\numberwithin{equation}{section} 
\newcommand{\ud}{\,d} 
\newcommand{\Sym}{\mathbb{S}} 
\newcommand{\R}{\mathbb{R}}
\DeclareMathOperator{\dist}{dist}
\newcommand{\tir}[1]{\ensuremath{\overline {#1}}} 
\newtheorem{thm}{Theorem}[section] 
\newtheorem{lemma}[thm]{Lemma} 
\newtheorem{prop}[thm]{Proposition} 
\newtheorem{defn}[thm]{Definition} 
\newtheorem{rem}[thm]{Remark}
\def\whsq{\vbox to 5.8pt 
{\offinterlineskip\hrule 
\hbox to 5.8pt{\vrule height 
5.1pt\hss\vrule height 5.1pt}\hrule}}
\def\<{\langle} 
\def\>{\rangle} 
\def\PP{{\mathop{{\rm I}\kern-.2em{\rm P}}\nolimits}} 
\def\FF{{\mathop{{\rm I}\kern-.2em{\rm F}}\nolimits}}   
\def\ZZ{{\mathop{{\rm I}\kern-.2em{\rm Z}}\nolimits}} 
\newcommand{\edit}[1]{{\color{black} #1}}
\newcommand{\st}[1]{}
\newlength{\sidemargin} 
\begin{document}
\title[]{
A variational method  for computing numerical solutions of the Monge-Amp\`ere equation
}


\thanks{
This research was supported in part by NSF grant DMS-1319640 to Gerard Awanou and NSF grant DMS-0931642 to the Mathematical Biosciences Institute.
}
\author{Gerard Awanou}
\address{Department of Mathematics, Statistics, and Computer Science, M/C 249.
University of Illinois at Chicago, 
Chicago, IL 60607-7045, USA}
\email{awanou@uic.edu}  
\urladdr{http://www.math.uic.edu/\~{}awanou}

\author{Leopold Matamba Messi}
\address{Mathematical Biosciences Institute,
The Ohio State University,
Columbus, OH 43210, USA}
\email{matambamessi.1@mbi.osu.edu}
\urladdr{http://people.mbi.ohio-state.edu}

\maketitle

\begin{abstract} 
We present a numerical method for solving the Monge-Amp\`ere equation based on 
the characterization of the solution of the Dirichlet problem as the minimizer 
of a convex functional of the gradient and under convexity and nonlinear 
constraints. When the equation is discretized with a certain monotone scheme, we 
prove that the unique minimizer of the discrete problem solves the finite 
difference equation. For the numerical results we use both the standard finite 
difference discretization and the monotone scheme. Results with standard tests confirm that the 
numerical approximations converge to the Aleksandrov solution.

\end{abstract}

\section*{Introduction}
We are interested in the numerical resolution of the Monge-Amp\`ere 
equation
\begin{equation}
\det  \nabla^2 u   = f  \ \text{in} \ \Omega, \quad u=g \ \text{on} \ \partial \Omega, 
\label{m1}
\end{equation}
where the domain $\Omega \subset \R^n$ is assumed to be bounded and convex with  
boundary $\partial \Omega$.  For a smooth function $u$, $\nabla^2 u$ is the Hessian 
of $u$ with entries  $(\partial^2 u) / (\partial x_i \partial x_j), i,j=1,\ldots, 
n$. In general the expression $\det \nabla^2 u$ should be interpreted in the sense of 
Aleksandrov.  The functions $f$ and $g$ are given  with $f\geq0$. We make the 
assumption that $f \in L^1(\Omega)$ and $g \in C^{0,1}(\partial \Omega)$ can be 
extended to a function $\tilde{g} \in C(\tir{\Omega})$ which is convex on 
$\Omega$.

We present a numerical method  based on a remark of P.L. Lions \cite{Lions86} 
which asserts that the unique $C^{0,1}(\tir{\Omega})$ solution of \eqref{m1} in 
the sense of Aleksandrov is the unique minimizer of 
$$
J(w) = \int_{\Omega} \Phi(\nabla w) \, dx ,
$$
over the convex set \st{for a nonnegative strictly convex function 
$\Phi$ satisfying Assumptions {\eqref{assumptions-phi}} below and over the 
convex set}
\begin{align}\label{s}
S = \big\{ v \in C(\tir{\Omega}),~ v=g \text{ on } \partial \Omega,
\, v \ \text{convex on} \, \Omega, \enskip (\det  \nabla^2 v)^{\frac{1}{n}}  \geq  
f^{\frac{1}{n}}    \ \text{in} \  \Omega \big\},
\end{align}
{where $\Phi$ is a nonnegative strictly convex function satisfying 
\eqref{assumptions-phi}.}

Our strategy consists in \st{minimizing} {reproducing} the above principle 
by solving discrete versions of the convex program
\begin{equation}
 \arg\min_{v\in S} J(v).
\end{equation}

We consider in this paper two notions of discrete convexity: the first one  
which we will refer to as local discrete convexity requires a certain discrete 
Hessian to be positive. The second one, recently used in \cite{Oberman2010a}, 
will be referred to as wide stencil convexity and requires to enforce the 
convexity of the mesh function approximately. 

{Given a discretization of the functional $J$, we consider two 
possible discrete counterparts of the convex set $S$ corresponding to 
different approximations of $\det \nabla^2u$.} If the determinant operator 
is discretized with the monotone scheme of \cite{Oberman2010a}, and if one uses wide stencil convexity, we prove that 
the unique minimizer of the corresponding  discrete optimization problem solves the 
{associated} finite difference version of \eqref{m1}. However, uniqueness 
of the solution of the latter finite difference problem remains an open 
question. We present numerical evidence of convergence  
to the Aleksandrov solution for a test case for which solving directly the nonlinear finite difference equations does not 
give satisfactory results. We do not address in this paper the convergence of the discretization proposed in \cite{Oberman2010a}
to the Aleksandrov solution and refer to \cite{AwanouAwi2}.

{We also consider the discrete version of the convex set $S$ obtained 
from standard finite difference approximations of $\det \nabla^2v$ and local discrete convexity.  We prove the 
existence of the solution of the resulting discrete convex program}
\st{of the standard finite difference discretization}under the assumption that a 
solution of the standard finite difference equations exists. The existence of a  
solution to the finite difference equations in this case is obtained in  
\cite{Awanou-k-Hessian12} under the assumption that
\eqref{m1} has a smooth solution. Furthermore, using a weak convergence result proved in \cite{AwanouAwi2}, we prove convergence of minimizers 
when \eqref{m1} has a smooth solution.
We present numerical evidence of convergence  
to the Aleksandrov solution. Our results can be combined with the approach in 
\cite{Awanou-Std-fd} to give a convergence result for Aleksandrov solutions.

The unknown $u$ in \eqref{m1} is a convex function which may not be smooth even 
if the data are smooth. Approximating the appropriate weak solution and 
preserving numerically the convexity property had posed great challenges for the 
numerical resolution of \eqref{m1} in the context of standard discretizations. 
In this paper, we take a direct approach by including explicitly the convexity 
constraints in an optimization framework. The notion of viscosity solution and 
that of Aleksandrov solution are the best known notions of weak solution for 
\eqref{m1}. They are equivalent for $f>0$ and continuous \cite{Gutierrez2001}. 
We refer to \cite{Rauch77} and the references therein for the precise definition 
of { the concept of  Aleksandrov solution for \eqref{m1}}.

The convexity of the set $S$ in \eqref{s} follows from Minkowski determinant  
theorem. See for example \cite[Theorem G, p. 205]{Wayne73} for smooth functions 
and \cite[Proposition 3.3]{Rauch77} for a procedure for the general case using 
approximation by smooth functions. We recall that for a convex function  $v$ on 
a convex domain $\Omega$, Rademeister theorem \cite[Theorem 19 p. 13]{Shor98} 
states that the set of points at which $v$ fails to be differentiable has zero 
Lebesgue measure and that its gradient is continuous on the set of points where 
it exists. Thus the functional $J$ is well defined on $S$. In \eqref{s}, $\det  
\nabla^2 v$ denotes the Monge-Amp\`ere measure associated to the convex function 
 $v$,  \cite{Gutierrez2001}. We use the notation $C^{0,1}(\tir{\Omega})$ for the 
set of Lipschitz continuous functions on $\tir{\Omega}$. 
Under our assumptions 
on $f$ and $g$, it can be shown that the Aleksandrov  solution of \eqref{m1} is 
the maximal element of $S$ when the domain is convex and not necessarily 
strictly convex \cite{Hartenstine2006}. Then the arguments in \cite{Lions86} 
extend to the case where $\Omega$ is assumed to be only convex. 


\subsection*{Relation with a method of Dean and Glowinski}

The optimization approach we take has some similarity with an optimization  
approach proposed by Dean and Glowinski \cite{Dean2006}. See also  
\cite{Glowinski2014}. They proposed in \cite{Dean2006} to solve \eqref{m1} by 
minimizing
$$
L_1(w) = \int_{\Omega} (\Delta w)^2\,dx,
$$
over
$$
E=\{\, v \in H^2(\Omega), \, v=g \ \text{on} \ \partial \Omega, \det  
\nabla^2 v = f   \, \text{in} \  \Omega \}.
$$
An equivalent mixed formulation was used with additional unknown $q=\nabla^2 u$  
and then solved by an augmented Lagrangian algorithm. In the mixed formulation, 
the functional $L(w,q) = \int_{\Omega} |\Delta w|^2$ is minimized over the set
$$
E'=\{ (v, q) \in H^2(\Omega) \times L^2(\Omega,\Sym),   v=g \ \text{on}  \ 
\partial \Omega, q=\nabla^2 v, \det  q = f\},
$$
where $\Sym$ is the space of $2 \times 2$ symmetric matrices and  
$L^2(\Omega,\Sym)$ is the space of symmetric matrix fields with components in 
$L^2(\Omega)$.  In the case $f$ is unbounded the results were not satisfactory.  
As remarked in \cite{Brenner2010b}, the convergence of their method, even for 
smooth solutions, is still an open problem. 

It will be seen that if one replaces the functional $L_1$ by
$$
L_2(w) = \int_{\Omega} |\nabla w|^2~dx,
$$
and the set $E$ by the convex set $S$ defined in \eqref{s},
one obtains a method of the type discussed in this paper and the difficulties 
indicated for the method of Dean and Glowinski disappear. One of the main 
features of the method we propose is that the convexity constraint in \eqref{s} 
has been carefully taken into account. 

\subsection*{Organization of the paper}

The paper is organized as follows: in the next section, we introduce some 
notation and analyze the discrete optimization problem with a standard  
discretization of $\det \nabla^2 u$ and local discrete convexity.  We then prove 
that when the equation is discretized with the monotone scheme of  
\cite{Oberman2010a}, the unique minimizer of the discrete problem solves the 
finite difference equation. Section \ref{sec:num} is devoted to numerical  
results. 

\section{Discrete optimization problem} \label{sec:disc}
 
We start with some notations needed to state the discrete optimization problem. 
All the functions we consider take finite values. We make the usual convention 
of denoting constants by the letter $C$. We make the following assumptions on 
the nonnegative convex function $\Phi$
\begin{subequations} \label{assumptions-phi}
\begin{equation}
\Phi \in C^2(\R^n) 
\end{equation}
\begin{equation}
|\nabla\Phi(p)| \leq C + C |p|
\end{equation}
\begin{equation}
\exists \, \nu > 0 \text{ such that }   \langle \nabla\Phi(p) 
- \nabla\Phi(q), p- q\rangle \geq \nu |p-q|^2, \quad \forall p, q \in \R^n
\end{equation}
\end{subequations}
where $\langle,\rangle$ denotes the Euclidean inner product in $\R^n$ and $|.|$ 
denotes the associated norm. Without loss of generality, we will assume, for the 
analysis  in this paper, that $\Phi$ is strictly convex. For example, one may 
take 
$\Phi(p) =  |p|^2 $. The results in \cite{Lions86} hold for more general
convex functions $\Phi$ and we also give numerical results for functions $\Phi$ 
not satisfying the above assumptions. 

\subsection{Notation and preliminaries}
Most of the notation is taken from \cite{Aguilera2008}. Let $\Omega=(0,1)^n,  h 
=1/N, N \in \mathbb{N}$ and let $\mathcal{M}_h$ denote the mesh which consists 
of points $x=h z  \in \R^n \cap \tir{\Omega}, z \in \mathbb{Z}^n$. Put
$$
\mathcal{M}_h^{\circ} = \mathcal{M}_h \cap \Omega, \quad \partial  \mathcal{M}_h 
= \mathcal{M}_h \cap  \partial \Omega,
$$ 
and denote by $\mathcal{U}_h$ the set of real valued functions defined on 
$\mathcal{M}_h$. Without loss of generality, for a function $v$ defined on 
$\tir{\Omega}$, we use the notation $v$ for the restriction of $v$ to  
$\mathcal{M}_h$.

\begin{defn}
If $u$ is defined on $\Omega$ and $u_h \in \mathcal{U}_h$, we say that $u_h$  
converges uniformly to $u$ on a compact subset $K$ of $\Omega$ if and only if
$$
\max_{x \in  \mathcal{M}_h^{\circ} \cap K} |u_h(x) - u(x)| \to 0,\text{ as }  
h \to 0.
$$
\end{defn}

We denote by $e_i, i=1,\ldots,n$ the $i$-th {coordinate} unit 
vector of $\R^n$ and consider first order difference operators defined on 
$\mathcal{M}_h^{\circ}$ by 
\begin{align*}
\partial^i_{+} v_h(x) & \coloneqq \frac{ v_h(x+he_i)-v_h(x) } {h} \\
\partial^i_{-} v_h(x) & \coloneqq \frac{ v_h(x)-v_h(x-he_i) } {h} \\
\partial^i_h v_h(x) & \coloneqq \frac{ v_h(x+he_i)-v_h(x-he_i) }{2 h}. 
\end{align*}
{We consider the following second-order difference operators on mesh functions} 
\begin{align}
\partial^i_{+} \partial^i_{-} v_h(x) & = \frac{v_h(x+he_i)- 2 v_h(x)+ 
v_h(x-he_i)}{h^2}, \label{second-disc1}
\end{align}
and
\begin{align}
\begin{split}
\partial^i \partial^j v_h(x) & = \frac{1}{4 h^2} \bigg\{v_h(x+he^i+h e^j)+ 
v_h(x-he^i-h e^j) \\
& \qquad \qquad \qquad -v_h(x+he^i-h e^j)-v_h(x-he^i+ he^j)\bigg\}, i \neq j.  
\label{second-disc2}
\end{split}
\end{align}
We use the notation $A=(a_{i j})_{1\le i,j \le n}$ to denote the matrix 
$A$  with entries $a_{ij}$. {The Hessian of a mesh function $v_h$ is 
the discrete matrix field} $H[v_h]=\big(H[v_h]_{ij}\big)_{1\le i,j\le n}$ with 
entries {the mesh functions}
\begin{align*}
H[v_h]_{ij}(x) = \begin{cases}
                  \partial^i_{+} \partial^i_{-} v_h(x) & \text{ if } i=j,\\[2mm]
                  \partial^i \partial^j v_h(x) & \text{ otherwise.}
                 \end{cases}
\end{align*}

{
\begin{defn} We say that a mesh function $v_h$ is (locally) discrete convex 
if $H[v_h](x) $ is positive semidefinite for all $x \in \mathcal{M}_h^{\circ}$. 
The mesh function $v_h$ is (locally) strictly discrete convex if $H[v_h](x) $ 
is positive definite for all $x \in \mathcal{M}_h^{\circ}$.
\end{defn}
}

{We endow the space of mesh functions defined on $\mathcal{M}_h$ with the 
following norms and semi-norms.}
\begin{align*}
|v_h|_{0,\infty} & = \max_{x \in \mathcal{M}_h^{\circ}} |v_h(x)|, \\
|v_h|_{1,\infty} & = \max \{ \, |\partial^i_{+} v_h|_{0,\infty}, i=1,\ldots,n \, 
\}, \\
 |v_h|_{2,\infty} & = \max \{ \,  |\partial^i_{+} \partial^i_{-}  
v_h|_{1,\infty},  |\partial^i \partial^j v_h|_{1,\infty},  i,j=1,\ldots,n \,\}, 
\\
||v_h||_{2,\infty} & =  \max \{ \,   |v_h|_{0,\infty},   |v_h|_{1,\infty},   
|v_h|_{2,\infty} \, \}.
\end{align*}

Analogues of the Sobolev norms and semi-norms can be defined on $\mathcal{M}_h$. 
For $v_h \in \mathcal{U}_h$ we define
\begin{align*}
||v_h||_{0} = \bigg( h^n \sum_{x \in \mathcal{M}_h^{\circ}} v_h(x)^2  \bigg)^{ \frac{1}{2} },
\end{align*}
and
\begin{align*}
|v_h|_{1}  = \bigg( \sum_{i=1}^n ||  \partial^i_{+}  v_h||^2_{0}\bigg)^{\frac{1}{2}}.
\end{align*}

We have the discrete Poincare's inequality, see for example \cite[Lemma 3.1]{Chung97} 
\begin{lemma} \label{poincare}
There exists a constant $C >0$ independent of $h$ such that 
$$
|v_h|_{1}  \geq C ||v_h||_{0}, 
$$
for $v_h=0$ on $\partial \mathcal{M}_h$.
\end{lemma}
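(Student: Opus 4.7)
The plan is to reduce to a one-variable Poincaré inequality by telescoping along a single coordinate axis, exploiting that $\Omega=(0,1)^n$ has unit side length and that $v_h$ vanishes on every boundary face. For an interior mesh point $x=(x_1,\ldots,x_n)\in\mathcal{M}_h^{\circ}$, I write $x_1=kh$ with $1\le k\le N-1$. Since $(0,x_2,\ldots,x_n)\in\partial\mathcal{M}_h$ and $v_h$ vanishes there,
$$v_h(x)=v_h(x)-v_h(0,x_2,\ldots,x_n)=h\sum_{j=0}^{k-1}\partial^1_{+}v_h(jh,x_2,\ldots,x_n),$$
so $v_h(x)$ is expressed as a sum of forward differences along the $e_1$-line through $x$.

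Applying Cauchy-Schwarz to this one-dimensional sum and using $kh\le 1$ gives the pointwise bound
$$|v_h(x)|^2\le kh\cdot h\sum_{j=0}^{k-1}|\partial^1_{+}v_h(jh,x_2,\ldots,x_n)|^2\le h\sum_{j=0}^{N-1}|\partial^1_{+}v_h(jh,x_2,\ldots,x_n)|^2.$$
Multiplying by $h^n$ and summing over $\mathcal{M}_h^{\circ}$, the right-hand side depends only on $(x_2,\ldots,x_n)$, so the sum over $x_1$ contributes a factor of at most $N$; using $Nh=1$ I obtain
$$||v_h||_0^2\le h^n\sum_{(j,x_2,\ldots,x_n)}|\partial^1_{+}v_h(jh,x_2,\ldots,x_n)|^2=||\partial^1_{+}v_h||_0^2\le|v_h|_1^2,$$
which is the claimed inequality with $C=1$.

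There is no substantive obstacle; the argument is essentially one-dimensional. The only bookkeeping to watch is aligning the telescoping length bound $kh\le 1$ with the extra factor of $h$ that appears in the definition of $\partial^1_{+}$, and tracking the weight $h^n$ in the $||\cdot||_0$ norm. The cube geometry makes the boundary point $(0,x_2,\ldots,x_n)$ automatically lie in $\partial\mathcal{M}_h$ on each $e_1$-line; for a more general domain one would have to chain forward differences along a lattice path to the nearest boundary mesh point and apply the same one-dimensional estimate segment by segment.
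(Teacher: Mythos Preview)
Your argument is the standard one-dimensional telescoping proof of the discrete Poincar\'e inequality, and it is essentially correct. The paper does not give its own proof of this lemma---it simply cites \cite{Chung97}---so there is nothing substantive to compare against.

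There is one small indexing slip worth noting. When you telescope from the left boundary, the sum $\sum_{j=0}^{k-1}\partial^1_{+}v_h(jh,x_2,\ldots,x_n)$ includes the term at the boundary point $j=0$, and your final identity
\[
h^n\sum_{(j,x_2,\ldots,x_n)}|\partial^1_{+}v_h(jh,x_2,\ldots,x_n)|^2=\|\partial^1_{+}v_h\|_0^2
\]
is off by exactly those $j=0$ contributions, since the norm $\|\cdot\|_0$ as defined in the paper sums only over $\mathcal{M}_h^{\circ}$. The fix is painless: telescope instead toward the \emph{right} boundary face $x_1=1$, writing
\[
v_h(x)=-h\sum_{j=k}^{N-1}\partial^1_{+}v_h(jh,x_2,\ldots,x_n).
\]
Now every point $(jh,x_2,\ldots,x_n)$ with $k\le j\le N-1$ lies in $\mathcal{M}_h^{\circ}$, and the remainder of your computation goes through verbatim, still yielding the inequality with $C=1$.
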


\subsection{Discretization of the functional $J$} 
{We recall that for a convex function $v$ on $\Omega$, the one-sided 
partial derivatives are defined everywhere and denote by $\nabla_- 
v$ the left-hand side gradient of $v$. Since the convex function $v$ is 
differentiable almost everywhere}, we have
\begin{equation} \label{new-conv}
J(w) = \int_{\Omega} \Phi(\nabla_- w) \ud x.
\end{equation}
We now use the latter expression of $J(w)$ to construct a discrete analogue of 
$J(w)$ using Riemann sums. 

For $x = (x_1, \ldots, x_n) \in \mathcal{M}_h$, we define
$$
P_x = \{ \, y \in \bar{\Omega}, x_i-h \leq y_i \leq x_i \, \}.
$$
{It is easy to see that $P_x$ is nonempty if and only if either 
$\dist(x,\partial\Omega) < h\sqrt{n}$ or $\dist(x-h,\partial\Omega)< 
h\sqrt{n}$. Furthermore, we have} $\cup_{x \in \mathcal{M}_h} P_x = 
\bar{\Omega}$ and for $x \neq y, P_x \cap P_y$ is a set of Lebesgue measure 0. 
For $x \in  \mathcal{M}_h$, we denote by $|P_x|$ the Lebesgue measure of $P_x$, 
i.e. $|P_x| = h^n$ if $P_x \cap \Omega \neq \emptyset$. We use the notation $x-h/2$ to 
denote the center of $P_x$ {that is, the point obtained by 
subtracting $h/2$ from each coordinate $x_i$ of $x$}.

We define for $v_h \in \mathcal{U}_h$ and a convex real-valued function $\Phi$ 
on $\R^n$
$$
J_h(v_h) =  \sum_{x \in \mathcal{M}_h^{}} |P_x| \Phi (\nabla_h v_{h}(x)), 
$$
where $\nabla_h v_{h}$ is the vector of backward finite differences of the mesh  
function $v_h$, i.e.
$$
\nabla_h v_{h}(x) = (\partial^i_- v_h(x))_{i=1,\ldots,n}.
$$ 
We {assume} that the sum in the definition of 
$J_h(v_h)$ is over the set of mesh points at which $\nabla_h v_{h}(x)$ is 
defined.
We note that for all $i$, $\partial^i_- v_h$ extends to a piecewise constant  
function on $\Omega$, denoted also by $\partial^i_- v_h$ and taking the constant 
value $\partial^i_- v_h(x)$ on $P_x$ for $x \in  \mathcal{M}_h$.

\begin{lemma} \label{lem-conv}
Let $v \in C^{2}(\tir{\Omega})$ and $v_h$ a family of mesh functions. We have
\begin{align} \label{stab-J}
\max_{x \in \mathcal{M}_h^{}} \bigg|\nabla_h v_{h}(x)  - \nabla_- v(x) \bigg|  
\to 0 \, \text{as} \, h \to 0 \, \text{implies} \, J_h(v_h) \to J(v) \, 
\text{as} \, h \to 0.
\end{align}
\end{lemma}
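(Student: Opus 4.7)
The plan is to rewrite $J_h(v_h) - J(v)$ as a single sum of integrals over the mesh boxes $P_x$ and bound each integrand using the uniform convergence hypothesis \eqref{stab-J} together with the local Lipschitz property of $\Phi$ on bounded sets.

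First I would verify the tiling property. Since $\Omega = (0,1)^n$, the mesh points $x \in \mathcal{M}_h$ at which $\nabla_h v_h(x)$ is defined are exactly those with $x_i \geq h$ for every $i$. For such $x$ one has $P_x \subset \tir{\Omega}$ with $|P_x| = h^n$, the $P_x$ have pairwise intersections of measure zero, and their union is $\tir{\Omega}$. Because $v \in C^2(\tir{\Omega})$, $\nabla_- v = \nabla v$ everywhere in $\Omega$, so by \eqref{new-conv}
\[
J(v) = \sum_x \int_{P_x} \Phi(\nabla v(y))\,dy,
\]
where the sum runs over the same set of admissible mesh points as in the definition of $J_h(v_h)$. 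Consequently
\[
J_h(v_h) - J(v) = \sum_x \int_{P_x} \bigl[\Phi(\nabla_h v_h(x)) - \Phi(\nabla v(y))\bigr]\,dy.
\]

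The main step is to estimate the integrand uniformly in $x$ and in $y \in P_x$. Since $v \in C^2(\tir{\Omega})$, $\nabla v$ is bounded on $\tir{\Omega}$ by some constant $M$; the hypothesis \eqref{stab-J} then forces $|\nabla_h v_h(x)| \leq M+1$ for $h$ small and all admissible $x$. The growth assumption on $\nabla \Phi$ together with $\Phi \in C^2(\R^n)$ yields a Lipschitz constant $L$ for $\Phi$ on the ball $\{|p| \leq M+1\}$, so
\[
|\Phi(\nabla_h v_h(x)) - \Phi(\nabla v(y))| \leq L\bigl(|\nabla_h v_h(x) - \nabla v(x)| + |\nabla v(x) - \nabla v(y)|\bigr).
\]
The first term on the right is uniformly small by \eqref{stab-J}, and the second is controlled by the modulus of continuity of $\nabla v$ (uniformly continuous on $\tir{\Omega}$) applied to $|x - y| \leq h\sqrt{n}$. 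Writing $\varepsilon_h$ for the resulting common bound, one obtains
\[
|J_h(v_h) - J(v)| \leq L \varepsilon_h \sum_x |P_x| = L \varepsilon_h |\Omega| \longrightarrow 0.
\]

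I do not expect a serious obstacle here: the argument is essentially a Riemann-sum comparison. The only technical care required is in identifying which mesh points contribute to the sum defining $J_h$, so that the covering property of the $\{P_x\}$ matches the domain of integration of $J(v)$ and the two quantities can be subtracted cell by cell.
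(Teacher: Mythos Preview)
Your proof is correct and follows essentially the same Riemann--sum comparison as the paper: both bound $|\Phi(\nabla_h v_h(x)) - \Phi(\nabla v(\cdot))|$ via local Lipschitz continuity of $\Phi$, then split the gradient difference into the hypothesis term and a term controlled by the uniform continuity of $\nabla v$. The only cosmetic difference is that the paper passes through the midpoint Riemann sum $\sum_x |P_x|\,\Phi(\nabla_- v(x-h/2))$ as an intermediate, whereas you write $J(v)=\sum_x\int_{P_x}\Phi(\nabla v(y))\,dy$ directly and compare cell by cell; your variant is slightly cleaner but not substantively different.
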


\begin{proof}
For $v \in C^{2}(\tir{\Omega})$, $\nabla v(x)$ is defined, is equal to 
$\nabla_- v(x)$ and is uniformly bounded on $\tir{\Omega}$. We have by 
definition of the integral
\begin{align}
J(v) & = \lim_{h \to 0} \sum_{x \in \mathcal{M}_h^{}}  |P_x| \Phi
\bigg(\nabla_- v \big(x - \frac{h}{2} \big) \bigg) \label{J-limit}.
\end{align}
Moreover
\begin{align*}
 \sum_{x \in \mathcal{M}_h}  |P_x|  \Phi\bigg(\nabla_- v \big(x - \frac{h}{2} 
\big) \bigg) - J_h(v_h) & = h^n \sum_{x \in \mathcal{M}_h^{}} 
\bigg(\Phi\bigg(\nabla_- v \big(x - \frac{h}{2} \big) \bigg) - \Phi(\nabla_h 
v_h(x)) \bigg). 
\end{align*}
And so by the $C^1$ continuity of $\Phi$ and the mean value theorem
\begin{align*}
\bigg|h^n \sum_{x \in \mathcal{M}_h} \Phi\bigg(\nabla_- v \big(x - \frac{h}{2} 
\big) \bigg) - J_h(v_h) \bigg| & \leq C \max_{x \in \mathcal{M}_h} 
\bigg|\Phi\bigg(\nabla_- v \big(x - \frac{h}{2} \big) \bigg) - \Phi(\nabla_h v_h 
(x))\bigg| \\
& \leq C \max_{x \in \mathcal{M}_h} \bigg|\nabla_- v \big(x - \frac{h}{2} ) - 
\nabla_h v_h (x)\bigg|.
\end{align*}
On the other hand
$$
\bigg|\nabla_- v \big(x - \frac{h}{2} ) -  \nabla_h v_h  (x)\bigg| \leq 
\bigg|\nabla_- v \big(x - \frac{h}{2} ) - \nabla_- v(x) \bigg| + 
\bigg|\nabla_- v(x) - \nabla_h v_h (x)\bigg|.
$$

By assumption $\max_{x \in \mathcal{M}_h} |\nabla_h v_{h}(x) - \nabla_- v(x) | 
\to 0 \, \text{as} \, h \to 0$. By the $C^1$ continuity of $v$, $\bigg|\nabla_- 
v \big(x - \frac{h}{2} ) - \nabla_- v(x) \bigg| \to 0$ as $h \to 0$.
Thus using \eqref {new-conv} and \eqref{J-limit}, we obtain $J_h(v_h) \to J(v) 
\text{ as } h \to 0$.
\end{proof}

\begin{rem}
If $v\in C^2(\tir{\Omega})$, we have for all $i=1,\ldots,n$ and  $x \in 
\mathcal{M}_h^{}$ by a Taylor series expansion
$$
\bigg|\frac{\partial v}{\partial x_i}(x) - \partial_-^i v (x) \bigg| \leq C h,
$$
where $C$ depends only on the maximum of $\partial^2 v/(\partial x_i^2)$ on 
$\tir{\Omega}$. Moreover
$$
\bigg|\frac{\partial v}{\partial x_i}(x-\frac{h}{2}) - \partial_-^i v (x) 
\bigg| \leq \bigg| \partial_-^i v (x) -\frac{\partial v}{\partial x_i}(x)\bigg| 
+ \bigg|\frac{\partial v}{\partial x_i}(x) - \frac{\partial v}{\partial 
x_i}(x-\frac{h}{2}) \bigg|.
$$
Using the $C^1$ continuity of \edit{$\partial v/\partial x_i$}, we obtain that 
the condition 
$\max_{x \in \mathcal{M}_h^{}} |\nabla_h v_{}(x) - \nabla_- v(x-h/2)| \to 0$ 
holds for $v\in C^2(\tir{\Omega})$ \edit{ and $v_h$ the mesh function obtained 
by restriction of $v$ onto the set $\mathcal{M}_h$.}
\end{rem}

\subsection{Standard discretization for $\det \nabla^2 u$ and local discrete 
convexity}

We seek to minimize $J_h$ over \edit{the following discrete counterpart of the 
set $S$ defined in \eqref{s}}
\begin{multline}\label{s-h}
S_h = \{\, v_h \in \mathcal{U}_h, v_h= g \, \text{ on } \,  \partial 
\mathcal{M}_h,~ H[v_h](x) \geq 0  
\text{ and } \,
(\det H[v_h](x))^{\frac{1}{n}} \geq f(x)^{\frac{1}{n}} \text{ for all } x \in 
\mathcal{M}_h^{\circ}  \, \}.
\end{multline}
This amounts to minimizing a strictly convex functional over the convex set 
$S_h$. Thus the main difficulty we face is to show that the set $S_h$ is 
nonempty. Our approach for proving that the set $S_h$ is nonempty is { to prove 
the existence of a solution of the finite difference system} 
\begin{align}
\begin{split} \label{Dirich-prob}
\det H[\hat{u}_h](x) & = f(x), x \in \mathcal{M}_h^{\circ} \\
H[\hat{u}_h](x) & \geq 0, x \in \mathcal{M}_h^{\circ} \\
\hat{u}_h & = g \, \text{on} \,  \partial \mathcal{M}_h.
\end{split}
\end{align}
The idea has been used in \cite{Awanou-Iso}. Let us assume that $0 < c_0 \leq f 
\leq c_1$ for constants $c_0$ and $c_1$. It is shown in \cite[Theorem 
3.4]{Awanou-k-Hessian12} that if $u \in C^4(\tir{\Omega})$ is a solution of 
\eqref{m1}, then the finite difference system \eqref{Dirich-prob} has a 
unique solution $\hat{u}_h$ in the ball
$$
B_{\rho}( u) = \{ \, v_h \in \mathcal{U}_h, ||v_h- u ||_{2,\infty} \leq \rho\, \},
$$
for 
$$
\rho = C ||u||_{C^{4}(\Omega)} h^2,
$$
and a constant $C$. 

\begin{prop} \label{lem-smooth}
Suppose that  the convex solution $u$ of \eqref{m1} is in $C^4(\tir{\Omega})$ 
and strictly convex. Then the functional $J_h$ has a unique minimizer $u_h$ in 
$S_h$ and 
$\inf_h J_h(u_h ) \leq J(u)$.
\end{prop}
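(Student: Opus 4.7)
My plan is to produce a member of $S_h$ from the smooth solution $u$, exploit strict convexity and coercivity of $J_h$ on the affine space of mesh functions with prescribed boundary values to get existence and uniqueness of the minimizer, and then pass to the limit using Lemma \ref{lem-conv} to compare $J_h(u_h)$ with $J(u)$. For nonemptiness of $S_h$, I will invoke the cited result from \cite{Awanou-k-Hessian12}: since $u \in C^4(\overline{\Omega})$ is strictly convex, \eqref{Dirich-prob} admits a unique solution $\hat u_h \in B_\rho(u)$ with $\rho = C\|u\|_{C^4(\Omega)} h^2$. By construction $H[\hat u_h] \geq 0$ and $(\det H[\hat u_h])^{1/n} = f^{1/n}$, so $\hat u_h \in S_h$. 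Convexity of $S_h$ then follows from the linearity of $v_h \mapsto H[v_h]$ together with the Minkowski determinant inequality $(\det(tA+(1-t)B))^{1/n} \geq t(\det A)^{1/n} + (1-t)(\det B)^{1/n}$ for symmetric positive semidefinite $A, B$, exactly as in the continuous case invoked in the introduction.

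For existence and uniqueness of $u_h$, I argue as follows. Backward differences together with boundary values determine a mesh function uniquely by recursion from $\partial \mathcal{M}_h$, so $v_h \mapsto \nabla_h v_h$ is injective on the affine space $\{ v_h : v_h = g \text{ on } \partial \mathcal{M}_h\}$; combined with strict convexity of $\Phi$, this yields strict convexity of $J_h$ on $S_h$. The strong convexity assumption \eqref{assumptions-phi} produces the quadratic minorant $\Phi(p) \geq \Phi(0) + \nabla\Phi(0)\cdot p + (\nu/2)|p|^2$, hence a lower bound of the form $J_h(v_h) \geq c |v_h|_1^2 - C|v_h|_1 - C'$. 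Writing $v_h = w_h + \tilde g_h$ with a fixed mesh extension $\tilde g_h$ of $g$ and $w_h = 0$ on $\partial \mathcal{M}_h$, the discrete Poincar\'e inequality (Lemma \ref{poincare}) gives coercivity of $J_h$ in $\|w_h\|_0$. Since $S_h$ is nonempty, closed and convex in a finite-dimensional space, and $J_h$ is continuous, coercive and strictly convex, a unique minimizer $u_h$ exists.

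Finally, since $\hat u_h \in S_h$, we have $J_h(u_h) \leq J_h(\hat u_h)$, and it suffices to show $J_h(\hat u_h) \to J(u)$ as $h \to 0$. The estimate $\|\hat u_h - u\|_{2,\infty} \leq C h^2$ in particular gives $|\hat u_h - u|_{1,\infty} \to 0$, so the forward differences $\partial_+^i \hat u_h$ converge uniformly to $\partial_i u$; a Taylor expansion relating forward and backward differences of the $C^1$ function $u$ to $\nabla u = \nabla_- u$ then verifies the hypothesis $\max_{x \in \mathcal{M}_h} |\nabla_h \hat u_h(x) - \nabla_- u(x)| \to 0$ of Lemma \ref{lem-conv}, which delivers $J_h(\hat u_h) \to J(u)$. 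The main hurdle I anticipate is precisely this last convergence: it requires matching the discrete norm in which $\hat u_h$ approximates $u$ with the uniform convergence of backward difference gradients needed by Lemma \ref{lem-conv}. The other steps (nonemptiness, convexity, coercivity and strict convexity) are essentially bookkeeping once $\hat u_h$ is in hand.
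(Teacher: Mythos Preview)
Your proposal is correct and follows essentially the same route as the paper: produce $\hat u_h\in S_h$ from \cite{Awanou-k-Hessian12}, use strict convexity of $J_h$ on the convex set $S_h$ to get a unique minimizer, and combine $J_h(u_h)\le J_h(\hat u_h)$ with $J_h(\hat u_h)\to J(u)$ via Lemma~\ref{lem-conv}.

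A few small differences are worth noting. You supply details the paper leaves implicit: you argue explicitly that $S_h$ is convex (Minkowski) and that $J_h$ is coercive and strictly convex on the affine space of mesh functions with boundary data $g$, whereas the paper simply asserts ``by the strict convexity of $J_h$'' after having declared $S_h$ convex. Conversely, the paper is more explicit on one point you gloss over: the bound $\|\hat u_h-u\|_{2,\infty}\le Ch^2$ controls differences only at interior mesh points, while Lemma~\ref{lem-conv} requires $\max_{x\in\mathcal M_h}|\nabla_h\hat u_h(x)-\nabla_-u(x)|\to 0$ including boundary points; the paper handles those by writing, for $x\in\partial\mathcal M_h$, $\partial_-^j\hat u_h(x)=(u(x)-\hat u_h(x-he_j))/h$ and bounding this against $\partial u/\partial x_j(x)$ using $|\hat u_h-u|_{0,\infty}\le Ch^2$ and the smoothness of $u$. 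Your ``Taylor expansion relating forward and backward differences'' sentence is aiming at this but does not quite isolate the issue. Finally, the paper actually proves the sharper intermediate fact $\inf_h J_h(\hat u_h)=J(u)$ by a short contradiction argument; you do not need this for the stated inequality, since $J_h(\hat u_h)\to J(u)$ already gives $\inf_h J_h(\hat u_h)\le J(u)$ and hence $\inf_h J_h(u_h)\le J(u)$.
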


\begin{proof}
It is shown in \cite[Theorem 3.4]{Awanou-k-Hessian12} that there exists 
$\hat{u}_h \in \mathcal{U}_h$ which solves \eqref{Dirich-prob}.
It follows that $\hat{u}_h \in S_h$ and thus $S_h$ is nonempty. By the strict 
convexity of $J_h$ we conclude that $J_h$ has a unique minimizer $u_h$ in $S_h$.
Since $|| \hat{u}_h -u||_{2,\infty} \leq C h^2$, 
we have
$\max_{x \in \mathcal{M}_h^{\circ}} |\nabla_h \hat{u}_h (x) - \nabla u(x)| \to 
0$  as $h \to 0$. \edit{Recall that $\hat{u}_h (x) = u(x)$ on $\partial 
\mathcal{M}_h$; therefore for $x \in \partial \mathcal{M}_h$ with $\nabla_h 
\hat{u}_h (x)$ defined, we have
\begin{align*}
|\nabla_h \hat{u}_h (x)-\nabla u(x)| & \leq C \max_{j}\big|\frac{\hat{u}_h(x)
- \hat{u}_h(x-h e_j)}{h} - \frac{\partial u}{\partial x_j}(x) \big| \\
  & = C \max_{j} \big|\frac{u(x) - \hat{u}_h(x-h e_j)}{h} - 
\frac{\partial u}{\partial x_j}(x) \big| \\
  & \leq C \max_{j} \big|\frac{u(x) - u(x-h e_j)}{h} - 
\frac{\partial u}{\partial x_j}(x) \big| + C \max_{j} \big|\frac{u(x-h e_j) 
- \hat{u}_h(x-h e_j)}{h}  \big|\\
& \le C 
\max_j\sup_{x\in\bar{\Omega}}\Big|\frac{\partial^2u}{\partial^2x_j}\Big| h + 
C \big|\hat{u}_h-u\big|_{0,\infty}.
\end{align*}
Since $|\hat{u}_h-u|_{0,\infty} \leq C h^2$ and $u$ is $C^2$ with uniformly  
bounded second derivatives, we conclude that 
$$
\max_{x\in\mathcal{M}_h} |\nabla_h \hat{u}_h(x)-\nabla u(x)| \to 0,
\text{ as }  h \to 0.
$$
}
Thus by Lemma \ref{lem-conv} we have $J_h(\hat{u}_h) \to J(u)$. 

We now prove, using a contradiction argument, that
\begin{equation*} 
\inf_h J_h( \hat{u}_h ) = J(u).
\end{equation*}
If $J(u) < \inf_h J_h(\hat{u}_h)$, then there is number $a$ such that  
$J(u) < a < \inf_h J_h(\hat{u}_h) \leq J_h(\hat{u}_h)$ for all $h$. Thus 
$\lim_{h \to 0} J_h(\hat{u}_h) \geq a > J(u)$ a contradiction.

On the other hand, if $\inf_h J_h(\hat{u}_h) < J(u)$, then there is number $b$ 
such that $\inf_h J_h(\hat{u}_h) < b < J(u)$. By definition of the infimum, we 
can find a subsequence $h_k$ such that 
$\inf_h J_h(\hat{u}_h) \le J_{h_k}(\hat{u}_{h_k})\leq b  < J(u)$. This also 
leads to a contradiction. Finally, since $u_h$ is a minimizer of $J_h$  it 
follows that $J_h(u_h) \leq J_h( \hat{u}_h )$  and $\inf_h J_h(u_h ) \leq J(u)$.
\end{proof}

To $v_h \in S_h$, we associate a Borel measure $M[v_h]$ defined by
$$
M[v_h](B) = h^n \sum_{x \in B \cap \mathcal{M}_h} \det H[v_h](x).
$$
The following lemma is proved in \cite{AwanouAwi2}.
\begin{lemma} \label{awi}
Let $v_h \in S_h$ converge uniformly on compact subsets to $v \in C(\tir{\Omega})$ and convex. Then $M[v_h]$ converges weakly to $\det \nabla^2 v$.
\end{lemma}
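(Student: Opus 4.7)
The plan is to establish $M[v_h] \rightharpoonup \det\nabla^2 v$ by combining a local mass bound (tightness) with a comparison against a genuinely convex interpolant of $v_h$, then applying Aleksandrov's classical weak continuity theorem for the Monge-Amp\`ere measure \cite{Gutierrez2001}. Weak convergence amounts to showing $\int\phi\ud M[v_h]\to\int\phi\ud(\det\nabla^2 v)$ for every $\phi\in C_c(\Omega)$, so all of the arguments can be localized to compact subsets of $\Omega$.

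First I would prove that $\{M[v_h]\}$ is locally uniformly bounded in mass. Uniform convergence $v_h\to v$ on compacta together with $v\in C(\tir{\Omega})$ gives local uniform boundedness of $v_h$; discrete convexity then yields a locally uniform Lipschitz bound on $v_h$, a discrete analogue of the fact that a convex function bounded on a ball $B_R$ is Lipschitz on any ball $B_r$ with $r<R$. Reinterpreting $h^n\det H[v_h](x)$ as (approximately) the Lebesgue volume of a discrete subgradient image of an $h$-cube about $x$, one concludes $M[v_h](K)\le C(K)$ for every compact $K\subset\Omega$, hence tightness and existence of weakly convergent subsequences.

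Next I would associate to each $v_h$ a genuinely convex function $\tilde v_h$ on a subdomain of $\Omega$---for instance, the convex envelope of a piecewise-affine interpolant of $v_h$ on a simplicial triangulation compatible with $\mathcal{M}_h$. Local discrete convexity of $v_h$ should ensure that $\tilde v_h$ coincides with the interpolant away from a negligible set, and uniform convergence of $v_h$ on compacta passes to uniform convergence $\tilde v_h\to v$. By Aleksandrov's weak continuity theorem \cite{Gutierrez2001}, $\det\nabla^2\tilde v_h\rightharpoonup\det\nabla^2 v$ as Borel measures on $\Omega$.

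It then remains to show $M[v_h]-\det\nabla^2\tilde v_h\rightharpoonup 0$. When $v$ is smooth, Taylor expansion yields $H[v_h](x)=\nabla^2 v(x)+o(1)$ uniformly on compacta, so both $M[v_h]$ and $\det\nabla^2\tilde v_h$ are Riemann sums converging to $\int\phi\det\nabla^2 v\ud x$ for every $\phi\in C_c(\Omega)$, and their difference is $o(1)$. For a general continuous convex limit $v$, I would approximate $v$ by smooth strictly convex $v^\varepsilon$ (via mollification) and pass to the limit in $\varepsilon$ and $h$ simultaneously by a diagonal argument, using the mass bound from the first step and the weak stability of $\det\nabla^2(\cdot)$ under uniform convergence of convex functions. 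The main obstacle is precisely this last comparison: $H[v_h]$ is not literally the Hessian of any smooth function, and classical finite-difference consistency breaks down exactly on the set where $\det\nabla^2 v$ may concentrate as a singular measure. The correct remedy is the volume-of-subgradient-image interpretation of $h^n\det H[v_h](x)$ already invoked in the tightness step, so that the comparison with $\det\nabla^2\tilde v_h$ is made directly against the Aleksandrov definition $|\partial v(\cdot)|$ rather than against pointwise Hessians.
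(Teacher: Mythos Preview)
The paper does not prove this lemma; it states only ``The following lemma is proved in \cite{AwanouAwi2}'' and defers the entire argument to that companion work. There is therefore no in-paper proof to compare your outline against.

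On the merits of your sketch, two genuine gaps stand out.

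First, Step~2 hinges on the claim that local discrete convexity of $v_h$ (positive semidefiniteness of the standard finite-difference Hessian $H[v_h]$) forces the convex envelope $\tilde v_h$ of a piecewise-affine interpolant to coincide with that interpolant away from a negligible set. This is not true in general: positivity of $H[v_h]$ at every interior node does \emph{not} guarantee that the nodal values are the restriction of any convex function, so $\tilde v_h$ can differ from the interpolant on a set of positive measure and the link between $\det\nabla^2\tilde v_h$ and $M[v_h]$ is lost. The distinction between ``locally discrete convex'' mesh functions and mesh functions that admit a convex extension is precisely one of the subtleties that motivates the separate notion of wide-stencil convexity in Section~1.4.

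Second, Step~3 is the core of the lemma and remains unproven. You invoke a ``volume-of-subgradient-image interpretation of $h^n\det H[v_h](x)$'', but for the standard nine-point discrete Hessian used here (with centered differences for the mixed second partials) no such identity is available; that geometric interpretation belongs to monotone or wide-stencil discretizations, not to $H[\cdot]$. The mollification-plus-diagonal argument does not close the gap either: smoothing $v$ to $v^\varepsilon$ controls $\det\nabla^2 v^\varepsilon$, but you have no uniform-in-$h$ comparison between $H[v_h]$ and $\nabla^2 v^\varepsilon$ when $v$ is merely continuous and convex, which is exactly the regime in which the statement has content.

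The overall architecture (tightness, then identification of the weak limit) is the right shape, but the identification step needs a tool different from any you list; this is presumably why the authors delegate the proof to \cite{AwanouAwi2}.
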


The following lemma is the only place in the paper where we use the result of Lions which gives a variational approach to the Aleksandrov solution of \eqref{m1}.
Arguing as in \cite{Aguilera2008}, we have

\begin{lemma} \label{Aguil-type}
Under the assumptions of Proposition \ref{lem-smooth}, we have
$$
\inf_h J_h(u_h ) = J(u).
$$
\end{lemma}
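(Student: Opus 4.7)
The plan is to establish the converse inequality $\inf_h J_h(u_h) \geq J(u)$; combined with the bound $\inf_h J_h(u_h) \leq J(u)$ already proved in Proposition \ref{lem-smooth}, this will give equality. Since we have in fact shown that $J_h(u_h) \leq J_h(\hat{u}_h) \to J(u)$, we actually know $\limsup_{h\to 0} J_h(u_h) \leq J(u)$, so it suffices to prove $\liminf_{h \to 0} J_h(u_h) \geq J(u)$. We argue by contradiction: suppose that along some subsequence $h_k \to 0$ we have $J_{h_k}(u_{h_k}) \to L < J(u)$. The strategy is to extract a further subsequence whose limit $v$ lies in $S$, then invoke the Lions variational principle (the only place in the paper where it is used, as noted) to conclude $J(u) \leq J(v) \leq L$, a contradiction.

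For the compactness step, the coercivity of $\Phi$ coming from assumption \eqref{assumptions-phi} together with the uniform bound $J_{h_k}(u_{h_k}) \leq J_{h_k}(\hat{u}_{h_k}) = O(1)$ yields a uniform bound on $|u_{h_k}|_1$. The boundary condition $u_{h_k}=g$ on $\partial\mathcal{M}_{h_k}$ and the discrete Poincar\'e inequality (Lemma \ref{poincare}), applied to the difference between $u_{h_k}$ and a fixed smooth extension of $g$, then control $\|u_{h_k}\|_0$. Combined with the local discrete convexity $H[u_{h_k}] \geq 0$ (which provides equi-Lipschitz estimates on compact subsets of $\Omega$ via a convex-function argument), standard compactness then gives a subsequence, still denoted $u_{h_k}$, converging uniformly on compact subsets of $\Omega$ to a convex function $v \in C(\overline{\Omega})$ with $v = g$ on $\partial\Omega$. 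Applying Lemma \ref{awi}, $M[u_{h_k}]$ converges weakly to $\det \nabla^2 v$; passing to the limit in the weak inequality $M[u_{h_k}] \geq f\,dx$ (which follows from the constraint $(\det H[u_{h_k}])^{1/n} \geq f^{1/n}$ defining $S_{h_k}$) shows $\det \nabla^2 v \geq f\,dx$ as measures. Hence $v \in S$.

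The remaining step is a lower semicontinuity estimate $J(v) \leq \liminf_{k\to\infty} J_{h_k}(u_{h_k}) = L$. Extending the backward difference $\nabla_h u_{h_k}$ as a piecewise constant field $\bar\nabla_h u_{h_k}$ on $\Omega$, one writes $J_{h_k}(u_{h_k}) = \int_\Omega \Phi(\bar\nabla_h u_{h_k})\,dx$. Because the $u_{h_k}$'s are (locally) discretely convex and converge uniformly on compacta to the convex limit $v$, one obtains a.e.\ convergence $\bar\nabla_h u_{h_k}(x) \to \nabla v(x)$ (exploiting differentiability a.e.\ of the convex $v$ together with Kuratowski-type convergence of subgradients of convex functions). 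Fatou's lemma, combined with the nonnegativity of $\Phi$, then gives $J(v) \leq L$. By the Lions characterization, $u$ is the unique minimizer of $J$ over $S$, so $J(u) \leq J(v) \leq L < J(u)$, the desired contradiction.

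The most delicate point is the compactness/limiting argument: local discrete convexity does not directly provide a pointwise convex interpolant on $\Omega$, so obtaining both the uniform-on-compacta convergence to a convex $v$ and the a.e.\ convergence of the discrete gradients $\bar\nabla_h u_{h_k}$ requires some care, likely by constructing a convex extension or envelope of each $u_{h_k}$ and comparing. The weak convergence of the Monge-Amp\`ere measures is packaged into Lemma \ref{awi}, so the remaining work consists precisely of this discrete-to-continuous compactness step, which is the main technical obstacle in the argument.
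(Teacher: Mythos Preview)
Your overall architecture---show the reverse inequality by extracting a limit in $S$ and invoking the Lions variational principle---matches the paper's. The essential difference is in the compactness step, and there your proposal has a genuine gap that you yourself flag but do not close.

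You try to obtain compactness for the unrestricted minimizers $u_h$ from the energy bound $J_h(u_h)\le J_h(\hat u_h)=O(1)$ together with local discrete convexity $H[u_h]\ge 0$. Two of the steps you rely on are not justified and are in fact the hard part. First, local discrete convexity does \emph{not} give a convex interpolant, so the ``equi-Lipschitz on compacta via convexity'' argument and the later ``Kuratowski-type convergence of subgradients'' do not apply to $u_h$ directly; you acknowledge this but do not supply the bridge. Second, even granting locally uniform convergence $u_{h_k}\to v$, your lower semicontinuity step needs a.e.\ convergence $\bar\nabla_h u_{h_k}\to\nabla v$, which for genuinely convex functions would follow from uniform convergence, but here you have only discrete-Hessian positivity and an $H^1$-type bound---neither yields pointwise control of discrete gradients without some second-order estimate. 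These are exactly the obstacles you identify at the end, and they remain open in your write-up.

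The paper sidesteps both problems by a device you do not use: it introduces the truncated sets $S_{h,K}=S_h\cap\{\,\|v_h\|'_{2,\infty}\le K\,\}$ and works with the minimizers $u_h^K$ of $J_h$ over $S_{h,K}$. The uniform bound on second differences is now built in, so the Aguilera--Morin compactness theorem \cite[Theorem~4.5]{Aguilera2008} applies and yields a subsequence with $|u_{h'}^K-u^K|'_{1,\infty}\to 0$ for some $C^2$ convex $u^K$. This $C^1$-type convergence allows a direct application of Lemma~\ref{lem-conv} (no Fatou, no a.e.\ gradients), and Lemma~\ref{awi} places $u^K$ in $S$. Lions then gives $J(u)\le J(u^K)=\inf_{h'}J_{h'}(u_{h'}^K)$, hence $J(u)\le\inf_{h,K}J_h(u_h^K)$. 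The final, purely combinatorial observation is that for each fixed $h$ one can choose $K$ large enough that $u_h\in S_{h,K}$, so $\inf_{h,K}J_h(u_h^K)=\inf_h J_h(u_h)$. This last trick is what lets the paper avoid ever proving compactness for the actual minimizers $u_h$.
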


\begin{proof}
It remains to prove that $J(u) \leq \inf_h J_h(u_h )$. The technique to prove such a result was given in \cite[Section 5]{Aguilera2008}. 
We make an essential use of Lemma \ref{awi} and the result of Lions \cite{Lions86}.

Let us define
\begin{align*}
|v_h|_{0,\infty}' & = \max_{x \in \mathcal{M}_h^{}} |v_h(x)|, \\
|v_h|_{1,\infty}' & = \max \{ \, |\partial^i_{+} v_h|_{0,\infty}', i=1,\ldots,n \, 
\}, \\
 |v_h|_{2,\infty}' & = \max \{ \,  |\partial^i_{+} \partial^i_{-}  
v_h|_{1,\infty}',  |\partial^i \partial^j v_h|_{1,\infty}',  i,j=1,\ldots,n \,\}, 
\\
||v_h||_{2,\infty}' & =  \max \{ \,   |v_h|_{0,\infty}',   |v_h|_{1,\infty}',   
|v_h|_{2,\infty}' \, \}.
\end{align*}
We make the assumption that the terms appearing in the definition of the above norms and semi norms are those for which the indicated discrete derivatives are defined.

For $K >0$, put
\begin{align*}
\Lambda^2_{h,K} & = \{ \, v_h \in \mathcal{U}_h, ||v_h||_{2,\infty}'  \leq K \, \}.
\end{align*}
Moreover, put $S_{h,K} = S_h \cap \Lambda^2_{h,K}$. 

Since $u\in C^4(\bar{\Omega})$, there exist $K>0$ and $h_0>0$ such 
that  $\hat{u}_h \in \mathcal{S}_h\cap\Lambda^2_{h,K}$, where $\hat{u}_h$ is 
the solution of \eqref{Dirich-prob}. This proves that there exists $K>0$ and $h_0 > 0$ such that $S_{h,K} $ is nonempty for all $0<h<h_0$.

Let $u_h^K$ denote the unique minimizer of $J_h$ in $S_{h,K}$. By \cite[Theorem 4.5]{Aguilera2008}, there exists a subsequence $u_{h'}^K$ and a $C^2$ convex function $u^K$
such that $ |u_{h'}^K -u^K|_{1,\infty}' \to 0$ and $ |u_{h'}^K -u^K|_{0,\infty}' \to 0$ as $h' \to 0$. 

We prove that 
$u^K=g$ on $\partial \Omega$. For $x\in \partial\Omega$, there is a family $x_{h'} \in \partial
\mathcal{M}_{h'}$ such that $x_{h'}$ converges to $x$. Thus
\begin{align*}
|u^K(x)-g(x)|& \le|u^K(x)-u^K(x_{h'})|+|u^K(x_{h'})-u_{h'}(x_{h'})|+|u_{h'}(x_h)-g(x)| \\
            & \le |u^K(x)-u^K(x_{h'})|+|u^K(x_{h'})-u_{h'}(x_{h'})|+|g(x_{h'})-g(x)| \\
           &  \le |u^K(x)-u^K(x_{h'})| + |u_{h'}^K -u^K|_{0,\infty}'  + |g(x_{h'})-g(x)|
\end{align*}
Passing to the limit as $h' \to 0$ yields $u^K(x)=g(x)$ by continuity of
$g$ and $u^K$ and the fact that $|u_{h'}^K -u^K|_{0,\infty}' $ as $h' \to 0$.  

By Lemma 
\ref{lem-conv} $J_{h'}(u_{h'}^K) \to J(u^K)$ and by Lemma \ref{awi} we have
$\det D^2 u^K \geq f$ in the sense of measures. Thus $u^K \in S$.

Since $u^K \in S$ and $J(u) = \inf_{v \in S} J(v)$, we have
$$
J(u) \leq J(u^K)= \inf_{h'} J_{h'} (u_{h'}^K).
$$ 
We may assume that $ \inf_{h'} J_{h'} (u_{h'}^K) = \inf_h J_h(u_h^K)$. This is because, using the definition of infimum, the sequence $u_{h'}^K$ can be chosen to satisfy that property before passing to a subsequence converging to $u^K$.
We conclude that
$$
J(u) \leq \inf_{h, K} J_h(u_h^K). 
$$
For a fixed $h$, we can find $K$ such
that $u_h$ is in $S_{h,K}$. Here $K$ depends on $h$. To see this, note that since
$h$ is fixed, the number of mesh points is finite and $u_h$ takes real
values. Thus $K$ can simply be taken as any umber greater than
$||u_h||'_{2,\infty}$. We conclude that $ \inf_{h, K}  J_h(u_h^K) = \inf_h J_h(u_h)$. This implies that
$$
J(u) \leq \inf_h J_h(u_h),
$$
and concludes the proof.
\end{proof}

For $\Phi(p) = |p|^2$ we can give a more precise result.

\begin{thm}  \label{thm-smooth}
Suppose that  the convex solution $u$ of \eqref{m1} is in $C^4(\tir{\Omega})$ 
and strictly convex. Then for $\Phi(p) = |p|^2$, the unique minimizer $u_h$ in 
$S_h$ of the functional $J_h$ satisfies
$$
||u_h-u||_{0} \to 0 \text{ as } h \to 0.
$$ 
\end{thm}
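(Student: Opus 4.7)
The strategy is to exploit the strict convexity of $\Phi(p)=|p|^2$ through the parallelogram identity
$$|p|^2 + |q|^2 = 2\left|\tfrac{p+q}{2}\right|^2 + \tfrac{1}{2}|p-q|^2,$$
applied pointwise with $p = \nabla_h u_h(x)$ and $q=\nabla_h \hat u_h(x)$, where $\hat u_h$ is the finite difference solution of \eqref{Dirich-prob} given by \cite[Theorem 3.4]{Awanou-k-Hessian12}. Weighting by $|P_x|$ and summing gives the discrete identity
$$J_h(u_h) + J_h(\hat u_h) = 2 J_h\!\left(\tfrac{u_h+\hat u_h}{2}\right) + \tfrac{1}{2}\, h^n \sum_{x} \bigl|\nabla_h(u_h-\hat u_h)(x)\bigr|^2.$$

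First I would verify that $S_h$ is convex, so that $(u_h+\hat u_h)/2 \in S_h$. This uses the linearity of the map $v_h \mapsto H[v_h]$ together with the Minkowski determinant inequality $(\det(A+B))^{1/n} \geq (\det A)^{1/n} + (\det B)^{1/n}$ for positive semidefinite $A,B$, exactly as in the continuous case. Minimality of $u_h$ then yields $J_h((u_h+\hat u_h)/2)\geq J_h(u_h)$, and the identity rearranges to
$$J_h(\hat u_h) - J_h(u_h) \geq \tfrac{1}{2}\, h^n \sum_{x} \bigl|\nabla_h(u_h-\hat u_h)(x)\bigr|^2.$$

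Next I would show the left-hand side vanishes. Lemma \ref{Aguil-type} gives $\inf_h J_h(u_h) = J(u)$, hence $J_h(u_h) \geq J(u)$ for every $h$; the computation inside the proof of Proposition \ref{lem-smooth} gives $J_h(\hat u_h) \to J(u)$; and $u_h$ being the minimizer forces $J_h(u_h) \leq J_h(\hat u_h)$. A squeeze then shows $J_h(u_h)\to J(u)$ and therefore $J_h(\hat u_h) - J_h(u_h) \to 0$. Thus the discrete $H^1$-type semi-norm of $u_h-\hat u_h$ tends to zero as $h\to 0$.

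Finally, since $u_h = \hat u_h = g$ on $\partial \mathcal M_h$, the difference $u_h-\hat u_h$ vanishes on the discrete boundary, so the discrete Poincar\'e inequality (Lemma \ref{poincare}) gives $\|u_h-\hat u_h\|_0 \to 0$. Combining with $|\hat u_h - u|_{0,\infty} \leq C h^2$, which implies $\|\hat u_h - u\|_0 \to 0$, the triangle inequality concludes $\|u_h-u\|_0 \to 0$. The main technical obstacle is the bookkeeping needed to identify the semi-norm $h^n\sum_x |\nabla_h v_h(x)|^2$ (built from backward differences) with the semi-norm $|v_h|_1$ from Lemma \ref{poincare} (built from forward differences); because $\partial^i_- v_h(x) = \partial^i_+ v_h(x-he_i)$, this amounts only to a shift of summation index that introduces at most a boundary-layer discrepancy, absorbed by the zero boundary values of $u_h-\hat u_h$.
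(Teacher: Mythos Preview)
Your argument is correct. It differs from the paper's proof in a meaningful way, and the difference is worth recording.

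The paper derives the first--order optimality (variational) inequality
\[
h^n\sum_{x}\langle \nabla_h u_h(x),\nabla_h v_h(x)\rangle \ \ge\ h^n\sum_{x}|\nabla_h u_h(x)|^2
\]
at the minimizer $u_h$, applies it with the test function $v_h=u$ (the restriction of the exact solution), and obtains $|u_h-u|_1^2\le J_h(u)-J_h(u_h)$; the right--hand side is then shown to vanish using Lemma~\ref{lem-conv} and Lemma~\ref{Aguil-type}. You instead exploit the $2$--uniform convexity of $J_h$ via the parallelogram identity, compare $u_h$ to $\hat u_h$, and close with the triangle inequality $\|u_h-u\|_0\le\|u_h-\hat u_h\|_0+\|\hat u_h-u\|_0$.

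Both routes rest on exactly the same external ingredients (Proposition~\ref{lem-smooth} to get $J_h(\hat u_h)\to J(u)$, Lemma~\ref{Aguil-type} to get $\inf_h J_h(u_h)=J(u)$, and the discrete Poincar\'e inequality), so neither is ``more elementary''. Your route, however, sidesteps a delicate point in the paper's argument: the variational inequality is obtained from $\phi'(0)\ge 0$ with $\phi(t)=J_h(u_h+t(v_h-u_h))$, and $\phi$ has a minimum at $t=0$ only when the whole segment lies in $S_h$, i.e.\ when $v_h\in S_h$. The paper then tests with $v_h=u$, but the grid restriction of $u$ need not belong to $S_h$ because $\det H[u](x)=f(x)+O(h^2)$ may fall below $f(x)$. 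By comparing to $\hat u_h\in S_h$ instead, your argument avoids this issue entirely. The only cost is the extra triangle--inequality step and the (harmless) loss of a factor $\tfrac12$ in the coercivity estimate.

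Your final remark about the index shift between $\partial_-^i$ and $\partial_+^i$ is accurate and is all that is needed to match the semi--norm in Lemma~\ref{poincare}.
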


\begin{proof}

Let $\mathcal{M}_h^-$ denote the subset of $\mathcal{M}_h$ of mesh points $x$ at which $\nabla_h v_h(x)$ is defined for $v_h \in \mathcal{U}_h$.

We first establish that for $v_h \in \mathcal{U}_h$, we have
\begin{equation} \label{var-ineq}
\sum_{x \in \mathcal{M}_h^- } \< \nabla_h u_h(x), \nabla_h v_h(x) \> \geq \sum_{x \in \mathcal{M}_h^- }  |\nabla_h u_h(x)|^2.
\end{equation}
For $t \in [0,1]$, define
$$
\phi(t) = J_h(u_h + t(v_h-u_h)) = \frac{h^n}{2} \sum_{x \in \mathcal{M}_h^- }   |\nabla_h u_h(x) + t \nabla_h (v_h-u_h) (x) |^2.
$$
Clearly $\phi(t) \geq 0$ and $\phi$ is continuous. It is not difficult to check that for two vectors $r$ and $s$,
$$
\frac{d}{d t} |r+t s|^2 = 2 t |s|^2 + 2 \<r,s\>.
$$
This implies that $\phi$ is $C^1$. Since $\phi(t)$ achieves its minimum at $t=0$, we have
$$
0 \leq \phi'(0) = h^n \sum_{x \in \mathcal{M}_h^- } \<\nabla_h u_h(x), \nabla_h (v_h-u_h) (x) \>,
$$
from which \eqref{var-ineq} follows. 

By \eqref{var-ineq} we have
\edit{
\begin{align*}
|u_h-u|_1^2&=h^n \sum_{x \in \mathcal{M}_h^- } |\nabla_h (u_h-u) (x) |^2  \\
& = h^n \sum_{x \in \mathcal{M}_h^- }  |\nabla_h u_h(x)|^2 +  |\nabla_h u(x)|^2 -2 \<\nabla_h u_h(x), \nabla_h u(x) \> \\
& \leq h^n \sum_{x \in \mathcal{M}_h^- }  -|\nabla_h u_h(x)|^2 +  |\nabla_h u(x)|^2 \\
& \le J_h(u) - J_h(u_h)\\
&\le  \big|J_h(u)-J(u)\big| + \big|J(u)-J_h(u_h)\big|
\end{align*}
}
It then follows from \edit{Lemma \ref{lem-smooth} and }Lemma \ref{Aguil-type} that $|u_h-u|_1 \to 0$ as $h \to 0$. Since $u_h-u=0$ on $\partial \mathcal{M}_h$ the result follows from Poincare's inequality, Lemma \ref{poincare}.

\end{proof}

\subsection{Monotone discretization of $\det \nabla^2 u$ and wide stencil 
convexity}

We prove in this section that if one uses the monotone scheme introduced in 
\cite{Oberman2010a}, the minimizer of the discrete optimization problem is a 
solution of the corresponding finite difference equations.  Following 
\cite{Oberman2010a}, we define a monotone Monge-Amp\`ere operator by 
\begin{equation} \label{disc-det}
M[v_h](x) = \inf_{(\alpha_1,\ldots,\alpha_n) \in W_h(x)} \prod_{i=1}^n 
\frac{v_h(x+ \alpha_i) -2 v_h(x) + v_h(x-\alpha_i)}{|\alpha_i|^2}\enskip  
\text{ for } x \in \mathcal{M}_h^{\circ},
\end{equation}
where $W_h(x)$ denotes the set of orthogonal bases of $\R^n$ such that for 
$(\alpha_1,\ldots,\alpha_n) \in W_h(x)$, $x\pm \alpha_i \in 
\mathcal{M}_h^{\circ}, \forall i$. 

\begin{defn}
We say that a mesh function $v_h$ is {\it wide stencil convex} if and only if  
$\Delta_e v_h(x) = v_h(x+e) -2 v_h(x) + v_h(x-e) \geq 0$ for all $x \in 
\Omega_h$ and  $e \in \mathbb{Z}^n$ for which $\Delta_e v_h(x)$ is defined. 
\end{defn}
We recall that the discrete Laplacian takes the form
$$
\Delta_h v_h (x) = \sum_{i=1}^d \Delta_{h e_i} v_h(x),
$$
where $\{ \, e_i, i=1,\ldots,d \, \}$ denotes the canonical basis of $\R^d$.

Let $\mathcal{C}_h$ denote the cone of wide stencil convex mesh functions. 
In this section, we will refer to a wide stencil convex function simply as a discrete convex function. We also make the slight abuse of notation of denoting by $S_h$ the discrete version of the set $S$ using the notion of
wide stencil convexity and the monotone discretization of $\det \nabla^2 u$, i.e.
\begin{align} \label{s-h-2}
\begin{split}
S_h & = \{\, v_h \in \mathcal{C}_h, v_h= g \, \text{on} \,  \partial \mathcal{M}_h, 
\, \text{and} \,
(M[v_h](x) )^{\frac{1}{n}} \geq f(x)^{\frac{1}{n}}, x \in \mathcal{M}_h^{\circ} \, \}.
\end{split}
\end{align}
We seek a minimizer of $J_h$ over $S_h$. As in the previous section we consider the problem: find $u_h \in \mathcal{C}_h$ such that
\begin{align} \label{mono-h}
\begin{split}
M[u_h](x) & = f(x), x \in \mathcal{M}_h^{\circ} \\
u_h & = g \, \text{on} \, \partial \mathcal{M}_h.
\end{split}
\end{align}
When $f \in C(\Omega)$, it is shown in \cite{Oberman2010a} that $u_h$ converges 
uniformly on compact subsets to the so-called viscosity solution of \eqref{m1}  
when \eqref{m1} has a unique one. One can add a perturbation $\epsilon u_h$ to 
the operator to force uniqueness. But \eqref{mono-h} may have more than one 
solution. Here we prove that  
there is no uniqueness issue with the variational framework proposed in this paper.

\begin{thm} \label{first-good}
For $\Phi(p)=|p|^2$, the functional $J_h$ has a unique minimizer $u_h$ in $S_h$  
and $u_h$ solves the finite difference equation \eqref{mono-h}.
\end{thm}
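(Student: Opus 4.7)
For $\Phi(p)=|p|^2$, the functional $J_h$ is a strictly convex quadratic on the affine space of mesh functions with $v_h=g$ on $\partial\mathcal{M}_h$: two admissible functions with the same discrete gradient must coincide by the discrete Poincar\'e inequality (Lemma~\ref{poincare}). The set $S_h$ is a closed convex subset of this affine space, and wide stencil convexity along each discrete line gives a maximum principle which, together with its affine lower bound from the boundary data, shows $S_h$ is bounded. Assuming $S_h$ is nonempty (any mesh function produced by the iterative scheme of \cite{Oberman2010a} for \eqref{mono-h} sits inside $S_h$), compactness together with the strict convexity of $J_h$ yields a unique minimizer $u_h$.

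The main content is to upgrade the inequality $(M[u_h](x))^{1/n}\ge f(x)^{1/n}$ to an equality. I argue by contradiction: suppose there is $x_0\in\mathcal{M}_h^{\circ}$ at which $M[u_h](x_0)>f(x_0)$, and consider $v_h^{\epsilon}=u_h+\epsilon\mathbf{1}_{\{x_0\}}$ for small $\epsilon>0$. The coefficient of $u_h(x_0)$ inside $\Delta_e v_h(y)$ (for $y\ne x_0$ with $x_0\in\{y+e,y-e\}$) is $+1$, so every wide stencil constraint and every Monge-Amp\`ere constraint based at a neighbor of $x_0$ can only improve under this perturbation. The only constraints that can be violated are those anchored at $x_0$ itself, namely $\Delta_e u_h(x_0)\ge 0$ (decreasing by $2\epsilon$) and $M[u_h](x_0)\ge f(x_0)$ (decreasing continuously in $\epsilon$). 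A direct expansion yields
\begin{equation*}
J_h(v_h^{\epsilon})-J_h(u_h)=-2\epsilon h^n\,\Delta_h u_h(x_0)+2n\epsilon^2 h^{n-2},
\end{equation*}
where $\Delta_h=\sum_i\partial^i_+\partial^i_-$ is the standard discrete Laplacian. Wide stencil convexity gives $\Delta_h u_h(x_0)\ge 0$; if this quantity is strictly positive, then $J_h(v_h^{\epsilon})<J_h(u_h)$ for small $\epsilon$, provided $v_h^{\epsilon}$ still lies in $S_h$, contradicting minimality.

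What remains is to control the degenerate cases. If $\Delta_h u_h(x_0)=0$ then each $\Delta_{he_i}u_h(x_0)=0$, and evaluating the infimum in \eqref{disc-det} on the canonical basis gives $M[u_h](x_0)=0$, which contradicts $M[u_h](x_0)>f(x_0)\ge 0$. Similarly, if some non-canonical $\Delta_e u_h(x_0)=0$ so that the wide stencil constraint associated to $e$ is active and blocks the perturbation, completing $e$ to an orthogonal basis inside $W_h(x_0)$ forces one factor in \eqref{disc-det} to vanish and again gives $M[u_h](x_0)=0$, the same contradiction. The hard part is precisely this last step: verifying that every obstructing direction $e$ admits an orthogonal completion whose legs land in $\mathcal{M}_h^{\circ}$. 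This is the geometric point that must be handled with care near $\partial\mathcal{M}_h$; once it is settled, the perturbation $v_h^{\epsilon}$ is admissible whenever $M[u_h](x_0)>f(x_0)$, the contradiction closes, and $u_h$ solves \eqref{mono-h}.
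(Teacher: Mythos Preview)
Your argument follows the same route as the paper: show $S_h$ is convex and nonempty, obtain a unique minimizer from the strict convexity of $J_h$, then rule out slack in the Monge--Amp\`ere constraint by bumping $u_h(x_0)$ upward and checking that the energy drops. Your expansion $J_h(v_h^{\epsilon})-J_h(u_h)=-2\epsilon h^n\,\Delta_h u_h(x_0)+2n\epsilon^2 h^{n-2}$ is exactly the paper's computation (written there with $\epsilon/4$ in place of $\epsilon$).

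The one substantive difference is how the wide stencil constraints at $x_0$ are handled under the bump. The paper simply asserts that $M[u_h](x_0)>0$ forces $\Delta_e u_h(x_0)>0$ for \emph{every} admissible direction $e$, sets $\epsilon_0=\min_e\Delta_e u_h(x_0)>0$, and chooses the bump size below $\epsilon_0$ so that all constraints survive. You instead split into cases and flag this implication as the ``hard part'': whether every $e$ with $\Delta_e u_h(x_0)=0$ can be completed to an orthogonal basis lying in $W_h(x_0)$. Your caution is legitimate: a direction $e$ with $x_0\pm e\in\mathcal{M}_h$ but $x_0\pm e^{\perp}\notin\mathcal{M}_h$ (which can happen when $x_0$ is near $\partial\Omega$) admits no such completion, so the implication the paper invokes is not self-evident for such $e$. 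In short, you have not deviated from the paper's strategy; you have isolated the one step the paper takes without justification, and your own proof is incomplete at precisely the same point.
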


\begin{proof}
We first prove that the set $S_h$ is convex. We note that for $\lambda \geq 0$
$$
(M[\lambda v_h](x))^{\frac{1}{n}} = \lambda (M[ v_h](x))^{\frac{1}{n}}.
$$
It is therefore enough to prove that for $v_h, w_h \in \mathcal{C}_h$, we have 
\begin{equation} \label{conc}
 (M[ v_h + w_h ](x))^{\frac{1}{n}} \geq  (M[ v_h](x))^{\frac{1}{n}} +  
 (M[ w_h](x))^{\frac{1}{n}}. 
\end{equation}
Let $(\alpha_1,\ldots,\alpha_n) \in W_h(x)$ be such that 
\begin{align*}
M[ v_h + w_h ](x) & = \prod_{i=1}^n (\lambda_1^i + \lambda_2^i) 
\intertext{ with }
 \lambda_1^i  = \frac{v_h(x+ \alpha_i) -2 v_h(x) + 
v_h(x-\alpha_i)}{|\alpha_i|^2} &\text{ and }
 \lambda_2^i  = \frac{w_h(x+ \alpha_i) -2 w_h(x) + 
w_h(x-\alpha_i)}{|\alpha_i|^2}.  
\end{align*}
Since $v_h, w_h \in \mathcal{C}_h$, $\lambda_1^i,  \lambda_2^i \geq 0$ for all 
$i$.  By Minkowski's determinant theorem,
$$
\bigg( \prod_{i=1}^n (\lambda_1^i + \lambda_2^i) \bigg)^{\frac{1}{n}} \geq 
\bigg( \prod_{i=1}^n \lambda_1^i \bigg)^{\frac{1}{n}}  + \bigg( \prod_{i=1}^n  
\lambda_2^i \bigg)^{\frac{1}{n}}, 
$$
from which \eqref{conc} follows. 

We recall that Problem \eqref{mono-h} was shown in \cite{Oberman2010a} to have a solution. Thus the set $S_h$ is nonempty.
\edit{Since $\Phi$ is strictly 
convex by assumption, it follows that the functional $J_h$ has a 
unique minimizer $u_h$ on the convex set $S_h$.

We now show that $u_h$ solves the finite difference system \eqref{mono-h}. To 
this end, it suffices to show that 
$$M[u_h]=f \text{ on } \mathcal{M}_h^\circ.$$
}
Let us assume \edit{ to the contrary } that there exists $x_0 \in 
\mathcal{M}_h^{\circ}$ such that
$$
M[u_h](x_0) > f(x_0) \geq 0.
$$
This implies that for all $e \in \mathbb{Z}^d$, $ \Delta_e u_h(x_0) >0$. 
Let $\epsilon_0 = \inf \{ \,  \Delta_e u_h(x_0), e \in \mathbb{Z}^d \}$
and $\epsilon_1 =M[u_h](x_0) -  f(x_0)$. Finally, put $\epsilon=\min( \epsilon_0
, \epsilon_1)$. We define $w_h$ by
$$
w_h(x) = u_h(x), x \neq x_0, w_h(x_0) = u_h(x_0) + \frac{\epsilon}{4}.
$$
By construction $w_h= g \, \text{on} \,  \partial \mathcal{M}_h$. For 
$x \neq x_0$, $\Delta_e w_h(x)= \Delta_e u_h(x)$ or $\Delta_e w_h(x)= \Delta_e 
u_h(x) + \epsilon/4$. Moroever 
$\Delta_e w_h(x_0)= \Delta_e u_h(x_0)- \epsilon/2 \geq \epsilon_0- \epsilon/2 
\geq \epsilon/2 >0$ by the definition of $\epsilon$. We conclude that $w_h \in 
\mathcal{C}_h$.

For $x \neq x_0$, $M[w_h](x) \geq M[u_h](x)$ and $M[w_h](x_0) = M[u_h](x_0)  
-\epsilon/2$. Thus $M[w_h](x) \geq f(x)$ for all $x \in  \mathcal{M}_h^{\circ}$.
It remains to show that $J_h(w_h) < J_h(u_h)$. Let $\mathcal{M}_{x_0}$ denotes 
the subset of  $\mathcal{M}_h$ consisting in $x_0$ and the points $x_0 + h e_j, 
j=1,\ldots,d$ at which $\nabla_h u_h$ is defined.
We have
\begin{align*}
J_h(w_h) & = h^d \sum_{x \notin \mathcal{M}_{x_0}} |\nabla_h u_h(x)|^2 + h^d  
|\nabla_h u_h(x_0)|^2 +  \sum_{j=1}^d  |\nabla_h u_h(x_0+ h e_j)|^2 \\
J_h(w_h) & = h^d \sum_{x \notin \mathcal{M}_{x_0}} |\nabla_h u_h(x)|^2 + h^{d-2}
\sum_{i=1}^d (w_h(x_0) - w_h(x_0-h e_i))^2 \\
& \quad + h^{d-2} \sum_{j=1}^d \sum_{i=1}^d (w_h(x_0+h e_j) - 
w_h(x_0 +h e_j - h e_i))^2 \\
& = h^d \sum_{x \notin \mathcal{M}_{x_0}} |\nabla_h u_h(x)|^2  + h^{d-2} 
\sum_{j=1}^d \sum_{i=1 \atop i \neq j}^d (w_h(x_0+h e_j) - w_h(x_0+h e_j - h e_i 
))^2 \\
& \quad +  h^{d-2} \sum_{i=1}^d (w_h(x_0) - w_h(x_0-h e_i))^2 + 
(w_h(x_0+h e_i) - w_h(x_0 ))^2.
\end{align*}
However
\begin{multline*}
\sum_{i=1}^d (w_h(x_0) - w_h(x_0-h e_i))^2 + (w_h(x_0+h e_i) - w_h(x_0))^2 = \\
\sum_{i=1}^d (u_h(x_0) - u_h(x_0-h e_i))^2 + (u_h(x_0+h e_i) - u_h(x_0))^2 + 
\frac{d \epsilon^2}{8} - \frac{\epsilon}{2} \Delta_h u_h(x_0). 
\end{multline*}
Thus, \edit{by our choice of $\epsilon$}
\begin{align*}
J_h(w_h) & = J_h(u_h) + \frac{d \epsilon^2}{8} -   \frac{\epsilon}{2} 
\Delta_h u_h(x_0) = J_h(u_h) + \frac{\epsilon}{2}(  \frac{d \epsilon}{4} - 
\Delta_h u_h(x_0)) \\
& <  J_h(u_h),
\end{align*}
\st{by the choice of $\epsilon$.} Indeed, since $ \Delta_e u_h(x_0) \geq 
\epsilon_0$, we get $\Delta_h u_h(x_0)) \geq d \epsilon_0 \geq d \epsilon > d 
\epsilon /4$ \edit{which} \st{. This} contradicts the assumption that $u_h$ is 
a minimizer and concludes the proof.
\end{proof}

\section{Numerical results} \label{sec:num}

In this section, we report numerical results for the variational framework 
proposed in the paper for the 2D problem. For most of the numerical results we  
use a standard discretization  for $\det \nabla^2 u$ and local discrete 
convexity. We also include numerical results for a more general situation where 
the right hand side of \eqref{m1} is a measure. Previously published results on 
the monotone scheme are not satisfactory for this case \cite{Benamou2014b}.

Throughout this section $\bar{\Omega}$ is the unit square $[0,1]\times [0,1]$. 
We recall that the optimization problem of interest to us, in the case of the 
standard finite difference discretization, is the following:
\begin{equation}\label{eq:opt}
\begin{array}[m]{rll}
 \text{Minimize ~~~~~}& 
h^2 \sum\limits_{x \in \mathcal{M}_h } \Phi(\nabla_h u_h(x)) \\
 \text{ subject to }& ~~ u_{h} = g_{} & \text{ on } \partial 
\mathcal{M}_h \\[2mm]
 &\lambda_{min}(H [u_{h}](x)) \ge 0 & \text{ 
for }x\in\mathcal{M}_h^{\circ}\\[2mm]
 &~~ \sqrt{\det(H [u_{h}](x))}\ge \sqrt{f_{}(x)} & \text{ for }  x \in 
 \mathcal{M}_h^{\circ},
 \end{array}
\end{equation}
where $u_h$ is the unknown variable, $\lambda_{min}$ is the smallest eigenvalue 
of the matrix $H [u_{h}](x)$ and we recall that $H [u_{h}](x)$ denotes the 
discrete Hessian. 

\subsection{Solvability and implementation} 

Under the convexity assumption on $\Phi$,  since the operators 
$\sqrt{\det(\cdot)}$ and $\lambda_{min}(\cdot)$ are concave, it is easily 
verified that \eqref{eq:opt} is a convex optimization program. Therefore, we are 
guaranteed that any algorithm that finds a local minimizer recovers a global 
minimizer. Furthermore, the global minimizer will be unique if we choose $\Phi$ 
to be strictly convex. A variety of methods and algorithms to solve convex 
programs like \eqref{eq:opt}, including primal-dual and barrier methods, are 
readily available in the literature. It is not our goal in this section to 
develop or identify the most efficient method for solving \eqref{eq:opt}.
Instead, we aim to provide numerical evidence supporting the analysis done in 
section \ref{sec:disc}. For rapid prototyping, we use MATLAB and take advantage 
of the fact that our convex program is a typical ``disciplined convex program'' 
as introduced in the  convex optimization toolbox CVX \cite{Grant, cvx}. In CVX, 
the user has the choice between several solvers and we choose SDPT3  
\cite{SDPT3}. It implements an infeasible primal-dual path-following algorithm.

For computational expedience but at a cost of increased problem size, CVX 
internally converts problem \eqref{eq:opt} -- see for example \cite{Grant} and 
the references therein -- to the canonical form  
\begin{equation}\label{eq:optc}
\begin{split}
  \text{Minimize }& c^Tx+d\\
  \text{subject to } & \boldsymbol{A}x=b\\
  & x\in \mathcal{S},
\end{split}  
\end{equation}
where $\mathcal{S}$ is a convex set, $x\in\R^m$ is the unknown, and the 
parameters $\boldsymbol{A}\in \R^{k\times m}$, $b\in \R^k$, $c\in \R^m$ and 
$d\in\R$ are computed from the original problem. The canonical problem \eqref{eq:optc} is then solved using the solver SDPT3.

\subsection{Test cases and results} 
We provide numerical evidence for four scenarios. The data of the corresponding 
Monge-Amp\`ere  Dirichlet problems are  given in \textsc{Table} \ref{tbl:data}. 
The results are reported in \textsc{Tables} \ref{tbl:1}, \ref{tbl:2}  and  
\textsc{Figure} \ref{fig:fig}.
\begin{table}[!h]
\begin{tabular}{ccll}
\hline\\[-1mm]
  Test 1 & \enskip & $f(x,y)=(1+x^2+y^2)\mathrm{e}^{x^2+y^2}$ & 
$g(x,y)=\mathrm{e}^{(x^2+y^2)/2}$\\[2mm]
  Test 2 & & $f(x,y)=\dfrac{2}{(2-x^2-y^2)^2}$ & 
$g(x,y)=-\sqrt{2-x^2-y^2}$\\[2mm]
  Test 3 & & $f(x,y)=4$ &  $g(x,y)=(x-1/2)^2+(y-1/2)^2$ \\[2mm]
  Test 4 & & $ f(x,y)=0$ &  $g(x,y)=|x-1/2|$\\[2mm] \hline
\end{tabular}

\caption{Data for the numerical experiments with $\Omega = (0,\,1)\times 
(0\,1)$.  \label{tbl:data} }
\end{table}

\begin{figure} [!ht]
\subfloat[Numerical solution for Test 
1.]{\includegraphics[width=.42\linewidth]{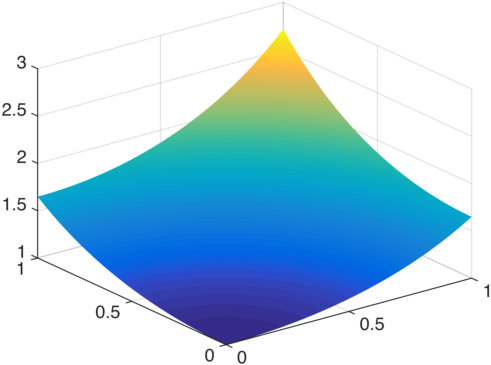}}\quad
\subfloat[Numerical solution for Test 
2.]{\includegraphics[width=.42\linewidth]{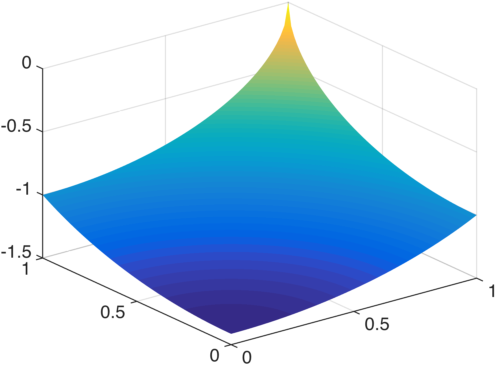}}\\[3mm]
\subfloat[Numerical solution for Test 
3.]{\includegraphics[width=.42\linewidth]{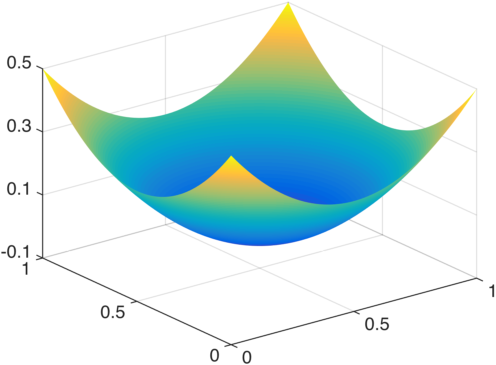}}\quad
\subfloat[Numerical solution for Test 
4.]{\includegraphics[width=.42\linewidth]{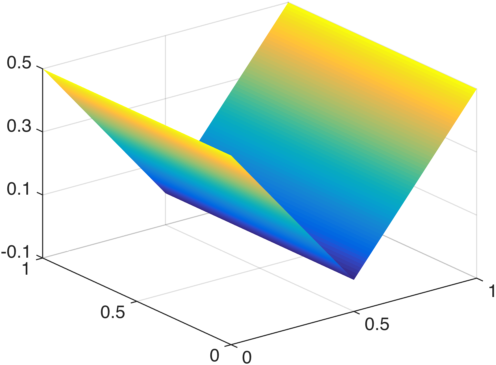}}
\caption{Solutions computed on a $65\times 65$ grid with $\Phi(x,y)=\sqrt{x^2+y^2}$ using the disciplined
convex programming toolbox CVX with the solver SDPT3.  \label{fig:fig}}
\end{figure}

\begin{table}[!h] 
\centering\setstretch{1.2}
 \begin{tabular}{ccccc }
\hline
$h$ & Test 1 & Test 2 & Test 3 & Test 4 \\ \hline
$2^{-2}$&3.9093$\times 10^{-3}$ & 2.5104$\times 10^{-2}$ & 2.9143$\times 10^{-16}$ & 1.7233$\times 10^{-5}$ \\ \hline
$2^{-3}$&1.0340$\times 10^{-3}$ & 2.6475$\times 10^{-2}$ & 1.1102$\times 10^{-16}$ & 3.8580$\times 10^{-15}$ \\ \hline
$2^{-4}$&2.6643$\times 10^{-4}$ & 2.2113$\times 10^{-2}$ & 5.5511$\times 10^{-17}$ & 1.0963$\times 10^{-14}$ \\ \hline
$2^{-5}$&6.6964$\times 10^{-5}$ & 1.6920$\times 10^{-2}$ & 3.0531$\times 10^{-16}$ & 1.5155$\times 10^{-14}$ \\ \hline
$2^{-6}$&1.6781$\times 10^{-5}$ & 1.2440$\times 10^{-2}$ & 1.6098$\times 10^{-15}$ & 2.3870$\times 10^{-15}$ \\ \hline\\[-2mm]
\end{tabular}
\caption{$L^\infty$ error for the function $\Phi(x,y)=\sqrt{1+x^2+y^2}$. \label{tbl:1} }
\end{table}

\begin{table}[!h] 
\centering\setstretch{1.2}
 \begin{tabular}{ ccccc }
\hline
$h$ & Test 1 & Test 2 & Test 3 & Test 4 \\ \hline
$2^{-2}$&2.2524$\times 10^{-2}$ & 2.7012$\times 10^{-2}$ & 6.3363$\times 10^{-11}$ & 7.3175$\times 10^{-5}$ \\ \hline
$2^{-3}$&4.1574$\times 10^{-3}$ & 2.6801$\times 10^{-2}$ & 1.6653$\times 10^{-16}$ & 1.0270$\times 10^{-15}$ \\ \hline
$2^{-4}$&1.1233$\times 10^{-3}$ & 2.2223$\times 10^{-2}$ & 5.5511$\times 10^{-17}$ & 3.1919$\times 10^{-15}$ \\ \hline
$2^{-5}$&3.1368$\times 10^{-4}$ & 1.6967$\times 10^{-2}$ & 4.7184$\times 10^{-16}$ & 6.5781$\times 10^{-15}$ \\ \hline
$2^{-6}$&1.3201$\times 10^{-4}$ & 1.2500$\times 10^{-2}$ & 2.1649$\times 10^{-15}$ & 1.0464$\times 10^{-14}$ \\ \hline\\[-2mm]
\end{tabular}
\caption{$L^\infty$ error for the function $\Phi(x,y)=|x|+|y|$. \label{tbl:2} }
\end{table}

\st{ \begin{table}[!h] 
\centering\setstretch{1.2}
 \begin{tabular}{ccccc}
\hline
$h$ & Test 1 & Test 2 & Test 3 & Test 4 \\ \hline
$2^{-2}$&3.9093$\times 10^{-3}$ & 2.5104$\times 10^{-2}$ & 1.3878$\times 10^{-16}$ & 5.0248$\times 10^{-5}$ \\ \hline
$2^{-3}$&1.0340$\times 10^{-3}$ & 2.6475$\times 10^{-2}$ & 2.2204$\times 10^{-16}$ & 1.9706$\times 10^{-15}$ \\ \hline
$2^{-4}$&2.6643$\times 10^{-4}$ & 2.2113$\times 10^{-2}$ & 1.1102$\times 10^{-16}$ & 1.2323$\times 10^{-14}$ \\ \hline
$2^{-5}$&6.6964$\times 10^{-5}$ & 1.6920$\times 10^{-2}$ & 5.5511$\times 10^{-16}$ & 6.0507$\times 10^{-15}$ \\ \hline
$2^{-6}$&1.6781$\times 10^{-5}$ & 1.2440$\times 10^{-2}$ & 8.7846$\times 10^{-15}$ & 1.0159$\times 10^{-14}$ \\ \hline\\[-2mm]
\end{tabular}
\caption{$L^\infty$ error for the function $\Phi(x,y)=\sqrt{x^2+y^2}$. \label{tbl:3}}
\end{table}
}
We conclude this section with numerical experiments for the Dirichlet problem  
for $\det \nabla^2 u =\nu$ with $\nu=  \pi/2 \, \delta_{(1/4,1/2)} +  \pi/2 \, 
\delta_{(3/4,1/2)}$ using the standard finite difference discretization and the monotone scheme. The 
exact solution, taken from \cite{Benamou2014b}, is given by
\[ u(x,y) = \left\{ 
  \begin{array}{l l}
    |y-\frac{1}{2}| & \quad \text{if $\frac{1}{4} < x < \frac{3}{4}$}\\
    \min\bigg\{ \, \sqrt{(x-\frac{1}{4})^2 + (y-\frac{1}{2})^2}, \sqrt{(x-\frac{3}{4})^2 + (y-\frac{1}{2})^2}  \, \bigg\} & \quad \text{otherwise}.
  \end{array} \right.\]

Our results are reported on \textsc{Tables} \ref{tbl:4}, \ref{tbl:5} and  \textsc{Figure} \ref{fig:fig2}.
This example was chosen because many existing methods fail to capture the solution. 
\begin{table}
\begin{tabular}{ccccc} 
 \multicolumn{5}{c}{$h$}\\
    $1/2^2$ &  $1/2^3$&  $1/2^4$ &  $1/2^5$ &  $1/2^6$ \\ \hline
   1.40 $10^{-2}$ & 1.44 $10^{-2}$ & 1.31 $10^{-2}$ & 1.25 $10^{-2}$ & 1.17 $10^{-2}$  \\
\end{tabular}
\caption{Dirac masses. $\Phi(x,y)=\sqrt{x^2+y^2}$, standard discretization of the determinant of the Hessian with discrete local convexity. \label{tbl:4} }\label{}
\end{table}

\begin{table}
\begin{tabular}{ccccc} 
 \multicolumn{5}{c}{$h$}\\
    $1/2^2$ &  $1/2^3$&  $1/2^4$ &  $1/2^5$ &  $1/2^6$ \\ \hline
   4.31 $10^{-3}$ & 1.08 $10^{-3}$ & 2.70 $10^{-4}$ & 6.74 $10^{-5}$ & 1.68 $10^{-5}$  \\
\end{tabular}
\caption{Dirac masses. $\Phi(x,y)=x^2+y^2$, monotone discretization of the determinant of the Hessian with wide stencil convexity on a 9 point scheme. \label{tbl:5} }\label{}
\end{table}

\begin{figure}
\subfloat[Analytical 
solution.]{\includegraphics[width=.40\linewidth]{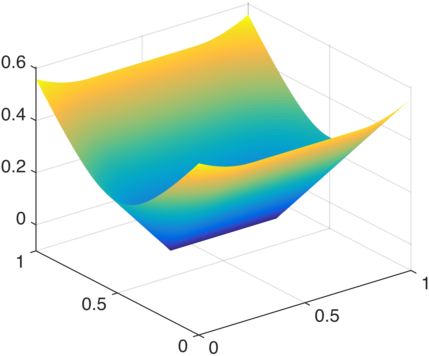}}\quad
\subfloat[Numerical 
solution.]{\includegraphics[width=.40\linewidth]{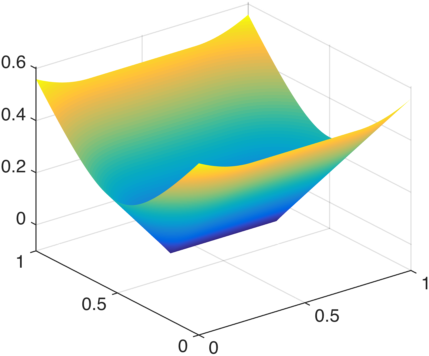}}
\caption{Dirac masses. The numerical solution is computed on a $65\times 65$ rectangular grid 
using the standard finite difference discretization. \label{fig:fig2}}
\end{figure}


\begin{rem}
Our numerical method also provides a technique for computing the convex envelope of boundary data and the convex envelope of a function defined on $\tir{\Omega}$. We recall from \cite{Lions86} that given $g$ on $\partial 
\Omega$ (satisfying the assumptions of this paper), the convex envelope of $g$ on  $\tir{\Omega}$ is the mimimum of $J$ over
$$
S = \{ v \in C(\tir{\Omega}),  \quad v=g \ \text{on} \ \partial \Omega,
\ v \ \text{convex on} \ \Omega \}.
$$
Also given any function $g$ defined on  $\tir{\Omega}$, the convex envelope of $g$ is the minimum of $J$ over
$$
S = \{ v \in C(\tir{\Omega}),  \quad v \leq g \ \text{on} \ \partial \Omega,
\ v \ \text{convex on} \ \Omega \}.
$$
\end{rem}


\end{document}